\renewcommand{\leq}{\ensuremath{\leqslant}} 
\renewcommand{\geq}{\ensuremath{\geqslant}}
\DeclareMathOperator{\Hom}{Hom}
\DeclareMathOperator{\Aut}{Aut}
\DeclareMathOperator{\Fct}{Fct} 
\DeclareMathOperator{\Ran}{Ran}
\newtheorem*{rep@theorem}{\rep@title}
\newcommand{\newreptheorem}[2]{%
\newenvironment{rep#1}[1]{%
 \def\rep@title{#2 \ref{##1}}%
 \begin{rep@theorem}}%
 {\end{rep@theorem}}}
\theoremstyle{plain}
\newtheorem{theo}{Theorem}[section]
\newtheorem{fact}[theo]{Fact} 
\newtheorem{lem}[theo]{Lemma}
\newtheorem{prop}[theo]{Proposition}
\newtheorem{cor}[theo]{Corollary}
\numberwithin{equation}{theo}
\theoremstyle{definition}
\newtheorem{defi}[theo]{Definition}
\newcommand{\Enstq}[2]{\left\{\ #1\ \middle|\ #2\ \right\}}    
\let\oldpagenumbering\pagenumbering
\renewcommand{\pagenumbering}[1]{%
	\cleardoublepage
	\oldpagenumbering{#1}
}
\author{Jacques \scshape{Darn\'e}}
\title{Co-induced actions for topological and filtered groups}
\date{December 1, 2018}
\begin{document}

\maketitle

\begin{abstract}
In this note, we show that the category $\mathcal{SCF}$ introduced in \cite{Darné} admits co-induced actions, which means that it is Locally Algebraically Cartesian Closed \cite{Gray, Bourn-G}. We also show that some co-induction functors exist in the category of topological groups, and that a \emph{convenient} category of topological groups is LACC.
\end{abstract}

\section*{Introduction}

The present paper deals with the study of actions in some categories of topological and filtered groups. The author's main motivation is the study of strongly central filtrations on groups (also called $N$-series). These occur in several contexts. For instance:
\begin{itemize}
\item On the Torelli subgroup of the automorphisms of free groups, there are two such filtrations defined in a canonical way ; the \emph{Andreadakis problem}, still very much opened, asks the question of the difference between them.
\item On the Torelli subgroup of the mapping class groups of a surface, the \emph{Johnson filtration} is a $N$-series. The difference between this filtration and the lower central series is linked to invariants of $3$-manifolds, such as the \emph{Casson invariant}.
\item On the pure braid groups or the pure welded braid groups, such filtrations appear in the study of \emph{Milnor invariants} and \emph{Vassiliev invariants}.
\end{itemize}
Considering strongly central filtrations as a category has led to a better understanding of phenomena appearing in these various contexts, in particular the role of Johnson morphisms, or semi-direct product decompositions of certain associated Lie algebras. At the heart of this work lies the study of \emph{actions} in this category. 
Our main result here is a further step in that direction:

\begin{reptheo}{SCF_is_LACC}
The category of strongly central filtrations admits co-induction functors along any morphism $\alpha: E_* \rightarrow B_*$, that is, a right adjoint to the restriction of $B_*$-actions along $\alpha$, for all $\alpha$.
\end{reptheo}

Moreover, our methods can be adapted to the case of topological groups, to show:

\begin{reptheo}{coinduction_between_lc_groups}
In the category of topological groups, there are co-induction functors along any morphism between locally compact groups.
\end{reptheo}

If we restrict our attention to a \emph{convenient} category of topological spaces, then more is true:

\begin{reptheo}{top_groups_is_LACC}
A \emph{convenient} category of topological groups admits co-induction functors along all morphisms.
\end{reptheo}

This last result can also be seen as an enriched version of the existence of co-induction in the category of groups, obtained from Kan extensions.

\section*{Strongly central filtrations and actions}

\subsection*{Strongly central filtrations}

We recall the definition of the category $\mathcal{SCF}$ introduced in \cite{Darné}.
\begin{defi}
A strongly central filtration $G_*$ is a nested sequence of groups  $G_1 \supseteq G_2 \supseteq G_3 \cdots$ such that $[G_i, G_j] \subseteq G_{i+j}$ for all $i, j \geq 1$. These filtrations are the objects of a category $\mathcal{SCF}$, where morphisms from $G_*$ to $H_*$ are those group morphisms from $G_1$ to $H_1$ sending each $G_i$ into $H_i$.
\end{defi}
Recall from \cite{Darné} that this category is complete, cocomplete, and homological, but not semi-abelian. It is also action-representative.

\subsection*{Actions}

In a protomodular category $\mathcal{C}$, an \emph{action} of an object $B$ on an object $X$ is a given isomorphism between $X$ and the kernel of a split epimorphism with a given section $\begin{tikzcd}[column sep=small] Y \ar[r, two heads] & B \ar[l, bend right] \end{tikzcd}$. Objects of $\mathcal C$ endowed with a $B$-action form a category, where morphisms are the obvious ones. This category of $B$-objects is the same as the category of split epimorphisms onto $B$ with given sections, also called \emph{points} over $B$, and is denoted by $Pt_B(\mathcal C)$.

For example, an action in the category of groups is a group action of a group on another, by automorphism (see for instance \cite{BJK}). The equivalence between $B$-groups (in the last sense) and $B$-points is given by: 
\[B \circlearrowright G \longmapsto \left(\begin{tikzcd} B \rtimes G \ar[r, two heads] & B \ar[l, bend right] \end{tikzcd}\right).\] 
The same construction allows us to identify the category of $B$-points in topological groups with the category of topological groups $G$ endowed with a topological $B$-action, \emph{i.e.\!} a group action such that $B \times G \rightarrow G$ is continuous.

Recall from \cite[Prop. 1.20]{Darné} that in $\mathcal{SCF}$, an action of $B_*$ on $G_*$ is the data of a group action of $B_1$ on $G_1$ such that $[B_i, G_j] \subseteq G_{i+j}$ for all $i, j \geq 1$, where the commutator is taken in the semi-direct product $B_1 \ltimes G_1$, that is: $[b, g] = (b \cdot g)g^{-1}$.

\subsection*{Restriction and induction functors}

Suppose that our protomodular category $\mathcal C$ admits finite limits and colimits. Let $\alpha: E \rightarrow B$ be a morphism in $\mathcal C$. We can restrict a $B$-action along $\alpha$ by pulling back (in $\mathcal C$) the corresponding epimorphism. This defines a \emph{restriction functor} (also called \emph{base-change functor}):
\[\alpha^*: Pt_B \longrightarrow Pt_E.\]
If we are given a $E$-action, we can define the induced $B$-action by pushing out (in $\mathcal C$) the corresponding section. This defines an \emph{induction functor} $\alpha_*$, which is left adjoint to $\alpha^*$. 

A question then arises naturally: is there a co-induction functor ? That is, when does the restriction functor $\alpha^*$ also have a right adjoint ? When all $\alpha$ have this property, the category is called \emph{Locally Algebraically Cartesian Closed (LACC)}. This is a rather strong condition, implying for instance algebraic coherence \cite[Th. 4.5]{VdL}. This condition has been studied for example in \cite{Gray, Bourn-G}.

\bigskip

\noindent
\textbf{Acknowledgements}: The author thanks Tim Van der Linden for asking the question which became the starting point of this work, and for inviting him in Louvain-la-Neuve, where we had the most interesting and helpful discussions. He also thanks Alan Cigoli for taking part in these discussions and making some very interesting remarks.

\tableofcontents

\section{Co-induction in the category of groups} \label{section_groups}

Our aim is to show that the category $\mathcal{SCF}$ is LACC, that is, that it admits co-induced actions. We first review the case of groups, that will be the starting point of our construction.

The construction of co-induced group actions follows easily from the following fact: if $B$ is a group, the category $Pt_B(\mathcal{G}rps)$ of $B$-groups identifies with the functor category $\Fct(B, \mathcal Grps)$, where $B$ is considered as a category with one object. Then the restriction functor $\alpha^*$ along a group morphism $\alpha: E \rightarrow B$ is given by precomposition by $\alpha$ (interpreted as a functor between the corresponding small categories). Since $\mathcal Grps$ is complete and co-complete, this functor has both a left and a right adjoint, given by left and right Kan extensions along $\alpha$.

Let us describe the right adjoint of $\alpha^*$ in this context. If $\varphi: \mathcal C \rightarrow \mathcal D$ is a functor between small categories, recall that the right Kan extension of $F: \mathcal C \rightarrow \mathcal T$ along $\varphi$ is given by the end formula:
\[\Ran_\varphi(F) = \int_{c \in \mathcal C} \hom(\mathcal D(-,\varphi(c)), F(c)),\]
where $\hom$ denote the co-tensor over $\mathcal Sets$: if $T$ is a set, and $T \in \mathcal T$, then $\hom(X,T) = T^X$ is the product of $X$ copies of $T$.

In our situation ($\varphi = \alpha$, and $F = Y$ is a $B$-group), the coend is taken over the one object $*$ of $E$. Moreover, $\hom(B(*,\alpha(*)),Y(*))$ is the group of applications from $B$ to $Y$ (whose product is defined pointwise), and the coend is the subgroup of applications $u$ satisfying $u(\alpha(e)b)=e \cdot u(b)$ for all $e \in E$ and $b \in B$, that is, the subgroup of $E$-equivariant applications $\hom_E(B,Y)$. The action of $B$ on $\hom_E(B,Y)$ is given by $(b \cdot u)(-) = u( - \cdot b)$. Thus, we recover:

\begin{prop} \textup{\cite[Th.\ 6.11]{Gray}}.
In the category of groups, co-induction along a morphism $\alpha: E \rightarrow B$ is given by
$Y \longmapsto \hom_E(B,Y),$
where $\hom_E(B,Y)$ is the group of $E$-equivariant applications from $B$ to $Y$ (with multiplication defined pointwise), on which $B$ acts by $(b \cdot u)(-) = u( - \cdot b)$.
\end{prop}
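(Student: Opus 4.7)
The plan is to turn the informal argument sketched just before the proposition into a rigorous proof. First I would carefully verify the equivalence of categories $Pt_B(\mathcal Grps) \simeq \Fct(B, \mathcal Grps)$, viewing $B$ as the one-object category with morphism set $B$: a $B$-group $G$ corresponds to the functor sending the unique object $*$ to $G$ and each $b \in B$ to the automorphism $g \mapsto b \cdot g$, and morphisms of $B$-groups correspond to natural transformations. Under this identification, a morphism $\alpha: E \to B$ becomes a functor between the two one-object categories, and the restriction functor $\alpha^*$ is precisely precomposition by this functor.

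Since $\mathcal Grps$ is complete, precomposition with $\alpha$ admits a right adjoint given by the right Kan extension $\Ran_\alpha$, whose value on a functor $Y: E \to \mathcal Grps$ is computed by the end formula
\[\Ran_\alpha(Y) = \int_{e \in E} \hom(B(-, \alpha(e)), Y(e)).\]
Since $E$ has a single object $*$, this end reduces to an equalizer in $\mathcal Grps$. I would then compute it explicitly: its underlying set is the subgroup of the group $Y^B$ of all maps $B \to Y$ (with pointwise multiplication) consisting of those $u$ satisfying $u(\alpha(e) \cdot b) = e \cdot u(b)$ for every $e \in E$ and $b \in B$. This is exactly the subgroup $\hom_E(B,Y)$ of $E$-equivariant applications.

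Finally, I would identify the induced $B$-action on $\Ran_\alpha(Y)$. It comes from the contravariant action of $B$ on the representable functor $B(-, \alpha(*))$: precomposition by right multiplication by $b$ on $B(*, \alpha(*)) = B$ induces on $\hom(B,Y)$ the transformation $(b \cdot u)(x) = u(xb)$, which preserves $E$-equivariance. The only point requiring real care is the bookkeeping of left versus right actions when setting up the equivalence $Pt_B \simeq \Fct(B, \mathcal Grps)$ and reading off the residual $B$-action from the Kan extension; everything else is a direct application of the general theory of ends and Kan extensions, using the completeness of $\mathcal Grps$.
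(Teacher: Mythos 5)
Your proposal is correct and follows essentially the same route as the paper: identify $Pt_B(\mathcal Grps)$ with $\Fct(B,\mathcal Grps)$, realize $\alpha^*$ as precomposition, compute the right Kan extension via the end formula over the single object of $E$, and read off $\hom_E(B,Y)$ with the action $(b\cdot u)(-)=u(-\cdot b)$. The only points you flag as needing care (the left/right bookkeeping) are indeed the only delicate points, and your treatment of them matches the paper's.
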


\section{Co-induction in the category \texorpdfstring{$\mathcal{SCF}$}{SCF}}

The construction described above in the category of groups seems to be very specific, as it relies on an identification between the categories of points and functor categories. Such an identification does not hold in the case of strongly central filtrations. However, we will be able to compare the categories of points with functor categories. This will allow us to use Kan extensions for constructing co-induction.

\subsection{Categories of points and functor categories}

When dealing with a strongly central filtration $G_*$, we will often omit the subscript $1$, for short, denoting the underlying group $G_1$ by $G$.

Let $B_*$ be a strongly central filtration. There is an obvious forgetful functor, recalling only that $B$ acts by automorphisms preserving the filtration:
\[\omega: Pt_{B_*}(\mathcal{SCF}) \longrightarrow \Fct(B,\mathcal{SCF}).\]
Since the compatibility conditions it forgets are only conditions on objects, not on morphisms, this functor is fully faithful.
Let $\alpha: E_* \rightarrow B_*$ be a morphism in $\mathcal{SCF}$. The restriction functor $\alpha_1^*$ between the corresponding functor categories fits into a commutative diagram:
\[\begin{tikzcd}
Pt_{B_*}(\mathcal{SCF}) \ar[r, "\omega"] \ar[d, "\alpha^*"] &\Fct(B,\mathcal{SCF}) \ar[d, "\alpha_1^*"] \\
Pt_{E_*}(\mathcal{SCF}) \ar[r, "\omega"]                    &\Fct(E,\mathcal{SCF}).
\end{tikzcd}\]
Moreover, since $\mathcal{SCF}$ is complete, $\alpha_1^*$ has a right adjoint, given by the right Kan extension $(\alpha_1)_!$. It follows from the description of limits in $\mathcal{SCF}$ \cite[Prop. 1.10]{Darné} and from the construction of Section \ref{section_groups} that $(\alpha_1)_!(Y_*)$ is $\hom_E(B,Y_*)$, that is, the filtration defined pointwise on $\hom_E(B,Y)$ (which is obviouly strongly central).

If we construct a right adjoint $t$ to the forgetful functor $\omega$ (for any $B_*$), then by composing adjunctions, $t \circ (\alpha_1)_!$ will be right adjoint to $\alpha_1^* \circ \omega$, and $t \circ (\alpha_1)_! \circ \omega$ will be right adjoint to $\alpha^*$, because $\omega$ is fully faithful:
\begin{align*}
\Hom\left(\alpha^*(X_*), Y_*\right) &= \Hom\left(\omega \circ \alpha^*(X_*), \omega (Y_*)\right) \\
                         &= \Hom\left(\alpha_1^* \circ \omega(X_*), \omega (Y_*)\right) \\
                         &= \Hom\left(X_*, t \circ (\alpha_1)_! \circ \omega (Y_*)\right).
\end{align*}

We will construct this functor $t = t^\infty (B_*, -)$ in the next section. This will finish the proof of our first main theorem:

\begin{theo}\label{SCF_is_LACC}
There are co-induction functors in $\mathcal{SCF}$. Explicitly, co-induction along $\alpha: E_* \rightarrow B_*$ is given by:
\[Y_* \longmapsto \alpha_!(Y_*) = t^\infty (B_*, \hom_E(B,Y_*)),\]
which is the largest strongly central filtration smaller than $\hom_E(B,Y_*)$ on which $B_*$ acts.
\end{theo}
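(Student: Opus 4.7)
The excerpt has already reduced the theorem to constructing a right adjoint $t = t^\infty(B_*, -)$ to the fully faithful forgetful functor $\omega : Pt_{B_*}(\mathcal{SCF}) \to \Fct(B, \mathcal{SCF})$: once this is in hand, the composite $t \circ (\alpha_1)_! \circ \omega$ will be a right adjoint to $\alpha^*$ by the calculation displayed in the text, and the explicit formula of the theorem follows from the description of the right Kan extension $(\alpha_1)_!$ given in Section~\ref{section_groups}. So the whole proof reduces to producing $t$.

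An object of $\Fct(B, \mathcal{SCF})$ is a strongly central filtration $Y_*$ together with a (group) action of $B$ by filtration-preserving automorphisms. I would define $t^\infty(B_*, Y_*)$ as the largest subfiltration of $Y_*$ on which the $B$-action restricts to a genuine $B_*$-action in $\mathcal{SCF}$, that is, the largest subfiltration $Z_* \subseteq Y_*$ satisfying $[B_i, Z_j] \subseteq Z_{i+j}$ for all $i, j \geq 1$ (the case $i=1$ already forces $B$-stability of each $Z_j$). Concretely, I would produce it by iterative trimming: set $t^0(B_*, Y_*)_j := Y_j$,
\[t^{n+1}(B_*, Y_*)_j := \bigl\{\, y \in t^n(B_*, Y_*)_j \;\bigl|\; [b, y] \in t^n(B_*, Y_*)_{i+j} \text{ for all } i \geq 1,\ b \in B_i \,\bigr\},\]
and then $t^\infty(B_*, Y_*)_j := \bigcap_{n \geq 0} t^n(B_*, Y_*)_j$. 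Unwinding the definitions, $y \in t^\infty(B_*, Y_*)_j$ if and only if $[b, y] \in t^\infty(B_*, Y_*)_{i+j}$ for all $i \geq 1$ and all $b \in B_i$, which is precisely the $\mathcal{SCF}$ compatibility condition for the restricted action.

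The verifications needed are then: (a) each $t^n(B_*, Y_*)_j$ is a $B$-stable subgroup of $Y_j$, so that $t^\infty(B_*, Y_*)_*$ is a strongly central subfiltration of $Y_*$; (b) $B_*$ acts on $t^\infty(B_*, Y_*)_*$ in the $\mathcal{SCF}$ sense, which is immediate from the characterisation above; (c) the universal property: any $B$-equivariant filtered morphism $f : \omega(X_*) \to Y_*$ from a $B_*$-object $X_*$ factors (uniquely) through $t^\infty(B_*, Y_*)$. Point (c) is a short induction on $n$: assuming $f(X_j) \subseteq t^n(B_*, Y_*)_j$ for all $j$, for $x \in X_j$ and $b \in B_i$ one has $[b, f(x)] = f([b, x]) \in f(X_{i+j}) \subseteq t^n(B_*, Y_*)_{i+j}$ using $B$-equivariance together with $[B_i, X_j] \subseteq X_{i+j}$, whence $f(x) \in t^{n+1}(B_*, Y_*)_j$.

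The main obstacle I expect is (a), specifically showing that the commutator condition defining $t^{n+1}$ is closed under the group operation of $Y_j$. Via the standard identity $[b, yy'] = [b, y] \cdot y[b, y']y^{-1}$, this reduces to proving that conjugation by elements of $Y_j$ preserves $t^n(B_*, Y_*)_{i+j}$. I would establish this by a simultaneous induction on $n$: at $n=0$ it is simply the normality of $Y_{i+j}$ in $Y$, and the inductive step should be handled by a Hall--Witt style manipulation rewriting $y[b, z]y^{-1}$ in terms of $[b, z]$ and nested commutators $[y, b]$, $[y, z]$, which by the inductive hypothesis already lie in deeper terms of the previous iterate of the filtration. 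Granting (a), the assertions (b) and (c) are formal from the definition, and the formula $\alpha_!(Y_*) = t^\infty(B_*, \hom_E(B, Y_*))$ then follows by composing the three adjunctions as in the paragraph preceding the theorem.
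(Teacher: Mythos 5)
Your overall strategy is the same as the paper's: reduce to constructing a right adjoint $t$ to $\omega$, build it by the same iterative trimming $t^{n+1}_j = \{y \in t^n_j \mid [B_i, y] \subseteq t^n_{i+j}\}$, intersect, and check the universal property by the same induction on $n$. Your points (c) and the $B$-stability part of (a) match the paper's arguments. But there is a genuine gap in your verification that $t^\infty(B_*, Y_*)_*$ is an object of $\mathcal{SCF}$ at all. From ``each $t^n_j$ is a $B$-stable subgroup of $Y_j$'' you conclude ``so that $t^\infty_*$ is a strongly central subfiltration of $Y_*$'' --- this is a non sequitur. Strong centrality requires $[t^n_i, t^n_j] \subseteq t^n_{i+j}$, and knowing only that $t^n_{i+j}$ is a subgroup of $Y_{i+j}$ gives $[t^n_i, t^n_j] \subseteq Y_{i+j}$, which is far weaker: you must show that for $x \in t^n_i$, $y \in t^n_j$ the commutator $[x,y]$ again satisfies the defining condition $[B_k, [x,y]] \subseteq t^{n-1}_{i+j+k}$. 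This is the one genuinely non-formal step, and the paper's Lemma \ref{transportation} handles it with the three subgroups lemma applied in $B \rtimes G$: $[B_k, [t_i, t_j]]$ lies in the normal closure of $[[B_k,t_i],t_j]$ and $[[B_k,t_j],t_i]$, both contained in $G_{i+j+k}$, which is normal in $B \rtimes G$ because it is normal in $G$ and $B$-stable. Nothing in your write-up supplies this, and it does not follow from (a) as stated.

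Relatedly, you have misplaced where the Hall--Witt-type argument is needed. The difficulty you flag as the ``main obstacle'' --- closure of the commutator condition under the group law, via $[b, yy'] = [b,y]\cdot({}^{y}[b,y'])$ --- is actually the easy part: once you know (inductively) that $t^n_*$ is strongly central, $t^n_{i+j}$ is normal in $t^n_1$, and the conjugating element $y$ lies in $t^n_j \subseteq t^n_1$, so ${}^{y}[b,y'] \in t^n_{i+j}$ with no further manipulation. The induction you should be running is: ``$t^n_*$ is a strongly central, $B$-stable filtration $\Rightarrow$ the same holds for $t^{n+1}_*$,'' where the subgroup and $B$-stability claims follow from normality as above, and the strong centrality claim is exactly the three-subgroups-lemma step you are missing. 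With that lemma inserted, the rest of your argument goes through and coincides with the paper's proof.
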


\subsection{Maximal \texorpdfstring{$B_*$}{B*}-filtration}

Since the forgetful functor $\omega: Pt_{B_*}(\mathcal{SCF}) \rightarrow \Fct(B,\mathcal{SCF})$ is fully faithful, if it admits a right adjoint $t$, then the counit $\omega t(G_*) \rightarrow G_*$ has to be a monomorphism. Thus, if $G_*$ is a strongly central filtration on which $B$ acts by automorphisms preserving the filtration, we need to construct the maximum sub-object of $G_*$ such that the restricted action of $B$ satisfies the conditions defining a $B_*$-action. We will do so through a limit process, restricting to smaller and smaller subgroups endowed with smaller and smaller filtrations.

\begin{prop}\label{maximum_action}
Let $B_*$ and $G_*$ be strongly central filtrations, and let $B = B_1$ act on $G = G_1$ by group automorphisms preserving the filtration $G_*$. Then there exists a greatest one among those strongly central filtrations $H_* \subseteq G_*$ such that the action of $B$ on $G$ induces and action of $B_*$ on $H_*$. 
\end{prop}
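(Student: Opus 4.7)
The plan is to define a contraction operator $T$ on a suitable class of sub-filtrations of $G_*$, and show that the iterates of $T$ starting from $G_*$ stabilize at the desired maximal $H_*$. The class in question consists of strongly central sub-filtrations $H_* \subseteq G_*$ for which each $H_j$ is stable under the given $B$-action on $G$; note that $G_*$ itself lies in this class by hypothesis. For such an $H_*$, set $(TH)_j := \{g \in H_j : [B_i, g] \subseteq H_{i+j} \text{ for all } i \geq 1\}$. The first step is to check that $T$ stays within the class. Subgroup-hood of $(TH)_j$ follows from the commutator identity $[b, g_1 g_2] = [b, g_1] \cdot g_1 [b, g_2] g_1^{-1}$ together with the fact that $H_j$ normalizes $H_{i+j}$ (by strong centrality of $H_*$). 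The nesting $(TH)_{j+1} \subseteq (TH)_j$ is immediate, and the $B$-stability of each $(TH)_j$ follows from the identity $[\beta_0, \beta g \beta^{-1}] = \beta \cdot [\beta^{-1} \beta_0 \beta, g] \cdot \beta^{-1}$ combined with the normality of $B_k$ in $B$.

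The main obstacle is showing strong centrality of $(TH)_*$: given $h_1 \in (TH)_i$ and $h_2 \in (TH)_j$, we have $[h_1, h_2] \in H_{i+j}$, but we must further show $[\beta, [h_1, h_2]] \in H_{i+j+k}$ for every $\beta \in B_k$ and every $k \geq 1$. I would use the Hall--Witt identity, which expresses this triple commutator in terms of $[[\beta, h_1], h_2]$ and $[[\beta, h_2], h_1]$ up to conjugations by elements of $\{h_1, h_2, \beta\}$. By the defining property of $(TH)$, $[\beta, h_1] \in H_{i+k}$ and $[\beta, h_2] \in H_{j+k}$, so $[[\beta, h_1], h_2] \in [H_{i+k}, H_j] \subseteq H_{i+j+k}$ and symmetrically for the other term. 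The conjugating elements lie in $H_1 \cup B$, each of which normalizes $H_{i+j+k}$ (by strong centrality of $H_*$ and by the $B$-stability hypothesis, respectively), so one concludes $[\beta, [h_1, h_2]] \in H_{i+j+k}$, as required.

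Once $T$ is known to preserve the class and to be (visibly) monotone with respect to inclusion, set $H^{(0)} := G_*$, $H^{(n+1)} := T H^{(n)}$, and $H^\infty_j := \bigcap_{n \geq 0} H^{(n)}_j$. Since $g \in H^\infty_j$ means $g \in H^{(n+1)}_j$ for every $n$, we get $[B_i, g] \subseteq H^{(n)}_{i+j}$ for every $n$, hence $[B_i, g] \subseteq H^\infty_{i+j}$; therefore $TH^\infty = H^\infty$, and $H^\infty_*$ defines a genuine $B_*$-action. Maximality is proved by induction on $n$: if $K_* \subseteq G_*$ is any strongly central sub-filtration on which $B_*$ acts in the $\mathcal{SCF}$ sense, then $K_* \subseteq H^{(0)} = G_*$; assuming $K_* \subseteq H^{(n)}$, the condition $[B_i, K_j] \subseteq K_{i+j} \subseteq H^{(n)}_{i+j}$ yields $K_j \subseteq (TH^{(n)})_j = H^{(n+1)}_j$, and passing to the intersection over $n$ gives $K_* \subseteq H^\infty_*$. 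This identifies $H^\infty_*$ as the desired greatest element, which will play the role of $t^\infty(B_*, G_*)$ in the construction of co-induction.
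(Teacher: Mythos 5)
Your proposal is correct and follows essentially the same route as the paper: the operator $T$ is the paper's transporter $t_*(B_*,-)$, your Hall--Witt argument for strong centrality is exactly the three subgroups lemma applied in $B\rtimes G$ as in Lemma \ref{transportation}, and the iteration-plus-intersection with the same fixed-point and induction arguments reproduces the construction of $t^\infty_*$.
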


\begin{cor}
The forgetful functor $\omega: Pt_{B_*}(\mathcal{SCF}) \rightarrow \Fct(B,\mathcal{SCF})$ has a left adjoint $t$, the filtration $t(G_*)$ being the greatest filtration constructed in the above proposition.
\end{cor}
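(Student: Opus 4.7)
The Corollary is a formal consequence of Proposition \ref{maximum_action}, so the substance lies in constructing the maximum sub-filtration. My plan is a decreasing iteration starting from $G_*$: set $H^{(0)}_* := G_*$, and recursively
\[H^{(n+1)}_k := \{g \in H^{(n)}_k \mid [b, g] \in H^{(n)}_{k+l} \text{ for all } l \geq 1 \text{ and } b \in B_l\},\]
then $H_* := \bigcap_{n \geq 0} H^{(n)}_*$. I expect $H_*$ to be the required maximum, with the iteration stabilising at $\omega$. The proof then breaks into three pieces: $H^{(n)}_*$ remains a strongly central sub-filtration preserved by $B$ at each step; the intersection satisfies the missing commutator condition; and any $B_*$-compatible sub-filtration of $G_*$ is contained in each $H^{(n)}_*$ by induction.

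The technical heart is the inductive step, and particularly the strongly central condition $[H^{(n+1)}_i, H^{(n+1)}_j] \subseteq H^{(n+1)}_{i+j}$. Closure of $H^{(n+1)}_k$ under products, inverses and the action of $B$ follows from standard identities such as $[b, gh] = [b, g] \cdot {}^g[b, h]$ and $[b, \beta_0 \cdot g] = {}^{\beta_0}[\beta_0^{-1} b \beta_0,\, g]$, together with the normality of $H^{(n)}_{l+k}$ in $H^{(n)}_1$ (from strong centrality) and its $B$-invariance (from the inductive hypothesis). For strong centrality, given $g \in H^{(n+1)}_i$, $h \in H^{(n+1)}_j$ and $\beta \in B_l$, I would apply the Hall-Witt identity in the form
\[[[\beta, g^{-1}], h^{-1}]^g \cdot [[g, h], \beta]^{h^{-1}} \cdot [[h^{-1}, \beta^{-1}], g]^\beta = 1.\]
By hypothesis on $g$, the element $[\beta, g^{-1}]$ lies in $H^{(n)}_{l+i}$; strong centrality of $H^{(n)}_*$ then places $[[\beta, g^{-1}], h^{-1}]$ in $H^{(n)}_{l+i+j}$, and conjugation by $g \in H^{(n)}_1$ preserves this subgroup. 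Symmetrically, the third factor lies in $H^{(n)}_{l+i+j}$, using $B$-invariance to absorb the conjugation by $\beta$. The Hall-Witt relation then forces the middle factor into $H^{(n)}_{l+i+j}$, so after conjugating back by $h$ we obtain $[\beta, [g, h]] \in H^{(n)}_{l+i+j}$, which is exactly the needed condition. Choosing this precise form of Hall-Witt so that $[g, h]$ appears explicitly in one of the three cyclic slots is the key trick, and I expect this commutator manipulation to be the main obstacle.

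Passage to the limit is formal: $H_* = \bigcap_n H^{(n)}_*$ inherits strong centrality and $B$-invariance, and for $g \in H_k$ and $b \in B_l$ one has $[b, g] \in H^{(n)}_{l+k}$ for every $n$, hence $[b, g] \in H_{l+k}$. Maximality follows by induction on $n$: any $B_*$-compatible $H'_* \subseteq G_*$ sits inside $H^{(0)}_*$, and if $H'_* \subseteq H^{(n)}_*$ then $[B_l, H'_k] \subseteq H'_{l+k} \subseteq H^{(n)}_{l+k}$ forces $H'_* \subseteq H^{(n+1)}_*$, so $H'_* \subseteq H_*$. For the Corollary, define $t(G_*) := H_*$: a morphism $f : G_* \to G'_*$ in $\Fct(B, \mathcal{SCF})$ sends $t(G_*)$ to a sub-filtration of $G'_*$ on which $B_*$ acts (as the image of a $B_*$-action), hence into $t(G'_*)$ by maximality, giving functoriality. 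The adjunction $\Hom_{Pt_{B_*}}(X_*, t(G_*)) \cong \Hom_{\Fct(B, \mathcal{SCF})}(\omega X_*, G_*)$ then reduces to the same maximality argument: any $B$-equivariant morphism $\omega X_* \to G_*$ has image in $t(G_*)$ and so factors uniquely through it.
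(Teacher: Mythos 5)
Your proposal is correct and follows essentially the same route as the paper: the same decreasing iteration (restrict at each stage to the elements whose commutators with $B_j$ land in the previous filtration), the same intersection over all stages to get the greatest $B_*$-stable sub-filtration, and the same maximality argument applied to the image $f(K_*)$ of a $B$-equivariant morphism to obtain the adjunction. The only cosmetic difference is that you verify strong centrality of each stage by an explicit element-level Hall--Witt computation where the paper invokes the three subgroups lemma in $B \rtimes G$ (the same argument packaged differently); note also that the Corollary's \emph{left adjoint} is a typo for \emph{right adjoint}, which is what both you and the paper actually prove.
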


\begin{proof}
Suppose that $B_*$ acts on $K_*$, that the group $B$ acts on $G_*$ (by filtration-preserving automorphisms), and that $f: K_* \rightarrow G_*$ is a $B$-equivariant morphism. Then $B_*$ acts on $f(K_*) \subseteq G_*$. As a consequence, $f(K_*) \subseteq t(G_*)$, so that $f$ is in fact a morphism from $K_*$ to $t(G_*)$.
\end{proof}

\begin{lem}\label{transportation}
Under the hypothesis of the proposition, the filtration defined by:
\[t_i(B_*, G_*):= \Enstq{g \in G_j}{\forall j,\ [B_j, g] \subseteq G_{i+j}}\]
is strongly central, and stable under the action of $B$.
\end{lem}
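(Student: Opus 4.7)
The plan is to verify the three defining conditions of a strongly central filtration — subgroup, nesting, strong centrality — together with $B$-stability. Writing $t_i = t_i(B_*, G_*)$, the nesting $t_{i+1} \subseteq t_i$ is immediate from $G_{i+1} \subseteq G_i$ and $G_{i+1+j} \subseteq G_{i+j}$.

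To see each $t_i$ is a subgroup of $G_i$, I would use the standard commutator identities $[b, gh] = [b,g] \cdot g[b,h]g^{-1}$ and $[b, g^{-1}] = g^{-1}[b,g]^{-1}g$, together with the fact that every $G_n$ is normal in $G_1$ (a consequence of strong centrality: $[G_1, G_n] \subseteq G_{n+1} \subseteq G_n$). If $g, h \in t_i$ and $b \in B_j$, then $[b, g]$ and $[b, h]$ both lie in $G_{i+j}$, so their conjugates by elements of $G_1$ do as well, whence $[b, gh]$ and $[b, g^{-1}]$ belong to $G_{i+j}$; closure under products and inverses follows.

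The heart of the argument is $[t_i, t_j] \subseteq t_{i+j}$. The inclusion $[t_i, t_j] \subseteq G_{i+j}$ is inherited from $G_*$; it remains to show $[B_k, [t_i, t_j]] \subseteq G_{i+j+k}$ for every $k \geq 1$. I would apply the three-subgroups lemma inside the semi-direct product $B_1 \ltimes G_1$ with $N = G_{i+j+k}$, which is normal there because $G_{i+j+k}$ is normal in $G_1$ and $B$-stable by hypothesis. Setting $X = t_i$, $Y = t_j$, $Z = B_k$, one checks $[[Y, Z], X] = [[t_j, B_k], t_i] \subseteq [G_{j+k}, G_i] \subseteq G_{i+j+k}$ using the definition of $t_j$ and strong centrality of $G_*$, and symmetrically $[[Z, X], Y] = [[B_k, t_i], t_j] \subseteq G_{i+j+k}$. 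The lemma then delivers $[[t_i, t_j], B_k] \subseteq G_{i+j+k}$, as required.

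Finally, for $B$-stability, a direct computation in $B_1 \ltimes G_1$ yields the identity $[b', b \cdot g] = b \cdot [b^{-1} b' b, g]$. Since $B_j$ is normal in $B_1$, if $b' \in B_j$ then $b^{-1} b' b \in B_j$, so for $g \in t_i$ we get $[b^{-1} b' b, g] \in G_{i+j}$; applying $b \in B_1$ preserves $G_{i+j}$ by hypothesis. Combined with $b \cdot g \in G_i$, this shows $b \cdot g \in t_i$. I expect the main delicacy to lie in the three-subgroups step: once the right triple and ambient normal subgroup are identified, the rest reduces to routine commutator manipulation.
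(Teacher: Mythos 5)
Your proof is correct and follows essentially the same route as the paper's: closure under products via the commutator identity $[b,gh]=[b,g]\cdot{}^g[b,h]$ and normality of the $G_n$, strong centrality via the three-subgroups lemma in $B_1\ltimes G_1$ with $G_{i+j+k}$ as the ambient normal subgroup, and $B$-stability via conjugation together with normality of $B_j$ in $B_1$. The only differences are cosmetic (element-wise phrasing of the stability argument, explicit treatment of inverses and nesting).
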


\begin{proof}
For short, denote $t_*(B_*, G_*)$ by $t_*$. These are subgroups of $G$ because of the normality of $G_{i+j}$ and the formula $[b, gg'] = [b,g]\cdot ({}^g\![b,g'])$. The three subgroups lemma applied in $B \rtimes G$ tells us that $[B_k, [t_i,t_j]]$ is contained in the normal closure of $[[B_k, t_i],t_j]$ and $[[B_k, t_j],t_i]$. These are both inside $G_{i+j+k}$, which is normal in $B \rtimes G$ because it is normal in $G$ and $B$-stable. Thus $[B_k, [t_i,t_j]] \subseteq G_{i+j+k}$, which means exactly that $[t_i,t_j] \subseteq t_{i+j}$. Each $t_i$ is also stable under the action of $B$, since if $b \in B$, using that the $B_j$ and the $G_k$ are $B$-stable, we have that $[B_j, t_i^b] = [{}^b\! B_j, t_i]^b  \subseteq [B_j, t_i]^b \subseteq G_{i+j}^b \subseteq G_{i+j}$,
thus $t_i^b \subseteq t_i$.
\end{proof}

\begin{proof}[Proof of Proposition \ref{maximum_action}]
Using Lemma \ref{transportation}, define a descending series of strongly central filtrations by:
\[\begin{cases} t^0_*(G_*):= G_*, \\ t^{l+1}_*(G_*):= t_*(B_*, t^l_*(G_*)). \end{cases}\]
Then define $t^\infty_* (G_*)$ to be their intersection. We claim that $t^\infty_* (G_*)$ is the requested filtration. Firstly, $B_*$ acts on it ; if $b \in B_j$, then:
\[\forall i, j, l,\ [B_j, t^\infty_i (G_*)] \subseteq [B_j, t^{l+1}_i (G_*)] \subseteq t^l_{i+j}(G_*),\]
so that by taking the intersection on $l$,
\[\forall i, j,\ [B_j, t^\infty_i (G_*)] \subseteq t^\infty_{i+j}(G_*).\]
Secondly, if $B_*$ acts on $H_* \subseteq G_*$, then by we show that $H_* \subseteq t^l_*(G_*)$ for all $l$: by definition, $H_* \subseteq G_* = t^0_*(G_*)$, and if $H_* \subseteq t^l_*(G_*)$ for some $l$, then:
\[\forall i, j,\ [B_i, H_j] \subseteq H_{i+j} \subseteq t^l_{i+j}(G_*),\]
proving that $H_* \subseteq t_*(B_*, t^l_*(G_*)) = t^{l+1}_*(G_*)$, and our claim. 
\end{proof}

\section{Co-induction for topological groups}

Our argument for strongly central filtrations can be adapted to the case of topological groups. The first part, about Kan extension, is exactly the same. The comparison between the category of points and functors categories, however, does only work with some restrictions : we need either the acting group to be locally compact, or the category of topological spaces to be a \emph{convenient} one, for example the category of compactly generated weakly Hausdorff spaces. The main idea behind the construction is the same as before: restricting to smaller and smaller subgroups, and refining more and more the topology.

\subsection{Categories of points and functor categories}

A topological group is the data of a group $G$ and a topology $\tau$ on it, with the usual compatibility requirements \cite[Chap. 3]{Bourbaki} ; it can be seen as a group object in the category of topological spaces. We will denote such an object by $(G,\tau)$, or only by $G$ or by $\tau$, whenever the rest of the data is clear from the context. Also, in any topological group $G$ and any point $g \in G$, we will denote by $\mathcal V_G(g)$ the set of neigbourhoods of $g$ in $G$. Note that the only topologies we will consider are the ones compatible with group structures ; expressions such as "the \emph{finest} topology satisfying..." have to be understood with this requirement in mind. Topological groups, together with continuous morphisms, form a category $\mathcal TopGrp$, which is complete, cocomplete, and homological.

If $B$ is a topological group, the obvious forgetful functor $\omega$ from $Pt_{B}(\mathcal TopGrp)$ to $\Fct(B,\mathcal TopGrp)$ is fully faithful. If $\alpha: E \rightarrow B$ is a continuous morphism, then restriction along $\alpha$ is defined in the usual way, between the category of points and between the categories of functors. The picture is exactly the same as in the case of strongly central filtrations, and the right Kan extension along $\alpha$ is defined in the same way: $\alpha_!(Y) =\hom_E(B,Y) \subseteq Y^B$, endowed with the topology inherited from the product topology on $Y^B$. We will show below (Proposition \ref{maximum_top_action}) that the forgetful functor admits a right adjoint for any locally compact $B$. Thus we will have proved:

\begin{theo}\label{coinduction_between_lc_groups}
There are co-induction functors along morphisms between locally compact groups in $\mathcal TopGrp$. Explicitly, co-induction along $\alpha: E \rightarrow B$ is given by:
\[Y \longmapsto \alpha_!(Y) = t^\infty (B, \hom_E(B,Y)),\]
which is the largest topological group endowed with an monomorphism into $\hom_E(B,Y)$, on which $B$ acts continuously.
\end{theo}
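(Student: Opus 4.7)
The plan is to mirror the proof of Theorem \ref{SCF_is_LACC} verbatim, substituting a topological analogue of Proposition \ref{maximum_action}. Since $\mathcal{TopGrp}$ is complete, the right Kan extension $(\alpha_1)_!$ along the underlying group morphism exists in the functor categories $\Fct(B, \mathcal{TopGrp})$ and $\Fct(E, \mathcal{TopGrp})$; computing limits in $\mathcal{TopGrp}$ on underlying groups with the initial topology identifies it explicitly with $\hom_E(B,Y) \subseteq Y^B$ equipped with the product topology, exactly as the statement asserts. Granted a right adjoint $t$ to the forgetful functor $\omega: Pt_B(\mathcal{TopGrp}) \to \Fct(B, \mathcal{TopGrp})$, the same composition-of-adjunctions argument (valid because $\omega$ is fully faithful) yields the co-induction functor $\alpha_! = t \circ (\alpha_1)_! \circ \omega$ with precisely the advertised formula. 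Thus the theorem reduces to constructing $t$, which is the content of Proposition \ref{maximum_top_action}.

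For that proposition I would follow the blueprint of Proposition \ref{maximum_action}. Starting from a topological group $(G, \tau)$ on which the underlying group of a locally compact $B$ acts by topological automorphisms, I would define a descending tower of subgroups with progressively refined topologies $t^l(B, G)$, where the passage from $t^l$ to $t^{l+1}$ is the topological analogue of Lemma \ref{transportation}: simultaneously restrict the underlying set to those points whose orbit map $b \mapsto b \cdot g$ is sufficiently controlled at $e_B$, and refine the neighborhood filter at $e_G$ by declaring $W$ to be a new neighborhood precisely when $K \cdot W$ is contained in an old neighborhood for some compact neighborhood $K$ of $e_B$ in $B$. Local compactness of $B$ is essential here: it guarantees that enough such $K$ exist, and it permits reformulating joint continuity of the action $B \times G \to G$ as a neighborhood condition that can actually be iterated. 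Setting $t^\infty(B, G)$ to be the inverse limit of this tower then yields the desired right adjoint, its universal property being established by the same inductive argument as in the proof of Proposition \ref{maximum_action}: any continuous $B$-object mapping into $G$ is shown to factor through each $t^l(B, G)$ by induction on $l$, hence through the limit.

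The main obstacle lies entirely inside the topological analogue of Lemma \ref{transportation}: one must verify at each refinement step that the resulting structure is still a topological group, that $B$ still acts by homeomorphisms on the refined object, and that the construction is functorial in $G$. The three-subgroups lemma used in the $\mathcal{SCF}$ proof has no direct topological substitute; its role is taken over by standard facts about locally compact groups, notably that products of compact sets remain compact and that continuity of conjugation persists under the refinement of the neighborhood filter at the identity. Once these technical verifications are made, the theorem follows formally by assembling the adjunctions. The reliance on local compactness in this construction signals exactly what is lost without it — the inability to control the refinement step well enough to iterate — and this is precisely the obstruction that the following Theorem \ref{top_groups_is_LACC} circumvents by passing to a cartesian closed category of topological spaces.
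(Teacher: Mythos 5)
Your formal reduction is exactly the paper's: identify $(\alpha_1)_!$ with $\hom_E(B,Y)\subseteq Y^B$ carrying the product topology, and compose with a right adjoint $t$ to the fully faithful forgetful functor $\omega$, so that everything hinges on Proposition \ref{maximum_top_action}. The overall shape of your construction of $t$ (shrink the underlying group, refine the topology, iterate, take the limit) also matches. But the one step you explicitly defer --- ``the topological analogue of Lemma \ref{transportation}'' --- is precisely the hard part, and your sketch of it has a genuine gap. You propose to refine the neighbourhood filter at $e_G$ by hand, declaring $W$ a neighbourhood when $K\cdot W$ lands in an old neighbourhood for some compact $K\ni e_B$, and then list as ``technical verifications'' that the result is a topological group, that $B$ acts by homeomorphisms, and that the construction is functorial. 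None of these is checked, and they are not routine: a filter at the identity defines a group topology only if it satisfies the usual axioms (for each $W$ a $W'$ with $W'W'\subseteq W$, stability under inversion and under conjugation by every $g\in G_1$), joint continuity of the action must hold at every point $(b,g)$ and not just near $(e_B,e_G)$, and above all you need the resulting topology to be the \emph{coarsest} one making $B\times G_1\to G$ continuous, since that minimality is what drives the adjointness (any continuous $B$-equivariant $f:X\to G$ must remain continuous into $(G_1,\tau_1)$). Your appeal to ``products of compact sets remain compact'' does not supply any of this.

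The paper's route makes all of these points essentially free, and it is worth seeing why: local compactness of $B$ enters through the exponential adjunction $B\times(-)\dashv\mathcal C(B,-)$ with the compact-open topology (Proposition \ref{adjunction}), not through ad hoc filter manipulations. One sets $G_1=\bar a^{-1}(\mathcal C(B,G))$ where $\bar a(g)=(-)\cdot g$, and gives it the subspace topology from $\mathcal C(B,G)$. Then $(G_1,\tau_1)$ is a topological group because $\mathcal C(B,G)$ is one under pointwise multiplication; $B$ acts continuously by precomposition by right translations; and the universal property (Lemma \ref{Adjunction_step}) is an immediate transcription of the adjunction --- this is exactly where the ``only if'' direction of the exponential law, hence local compactness, is used. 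Your subbasic neighbourhoods $\{g: K\cdot g\subseteq V\}$ are in fact the pullbacks of the compact-open subbasis along $\bar a$, so you are implicitly reconstructing the same topology; but without naming the adjunction you have no mechanism to prove coarseness or the factorization property, and the proof does not close. To repair your argument, replace the filter refinement by the compact-open/exponential-law description and the remaining steps (iteration, inverse limit, verification that the limit action is continuous levelwise) go through as you indicate.
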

Note that it is not the largest topological subgroup, as \emph{largest} here also mean: endowed with the coarsest possible topology.

If we restrict our category of topological spaces (for example to compactly generated weakly Hausdorff spaces), then we will show (Proposition \ref{maximum_cgwh_action}) that our construction works for all group object $B$.

\begin{theo}\label{top_groups_is_LACC}
There are co-induction functors in the category of topological groups (that is, it is LACC) when the base category of topological spaces is the category $CGWH$ of compactly generated weakly Hausdorff spaces, or any \emph{convenient} category of topological spaces, is a sense made precise below.
\end{theo}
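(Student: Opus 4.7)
The plan is to follow the template that proved Theorem~\ref{SCF_is_LACC} and Theorem~\ref{coinduction_between_lc_groups}: reduce the problem to constructing a right adjoint $t = t^\infty(B, -)$ to the fully faithful forgetful functor $\omega: Pt_B(\mathcal TopGrp) \to \Fct(B, \mathcal TopGrp)$, then set
$\alpha_!(Y) := t^\infty\bigl(B, \hom_E(B, Y)\bigr)$
and combine with the right Kan extension $(\alpha_1)_!$ along $\alpha$ exactly as in Section~3. The product of topological spaces, the topology on $\hom_E(B, Y) \subseteq Y^B$, and all limits appearing below are to be computed in the convenient category $\mathcal C$, that is, in a complete, cocomplete, cartesian closed full reflective subcategory of $\mathcal Top$; the guiding example is $CGWH$.

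The point of the convenient hypothesis is to replace the appeal to local compactness used in the proof of Theorem~\ref{coinduction_between_lc_groups}. In a cartesian closed $\mathcal C$ one has the exponential law $\mathcal C(B \times G, G) \cong \mathcal C(B, \underline{\Hom}(G, G))$ for \emph{all} group objects $B$ and $G$, so a continuous $B$-action on $G$ is equivalent to a morphism $B \to \underline{\Aut}(G)$ in $\mathcal C$; moreover the functor $B \times -$ is a left adjoint and so preserves colimits. These two consequences are what was previously available only under a local-compactness hypothesis on $B$, and together they are what makes the topological analogue of the $\mathcal{SCF}$ construction go through without that restriction.

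The central technical step is then the topological analogue of Lemma~\ref{transportation}: given a group object $B$ of $\mathcal C$ acting by (not a priori jointly continuous) automorphisms on a group object $G$, produce a subobject $t(B, G) \hookrightarrow G$, possibly with a refined topology, which is maximal among those on which the $B$-action is continuous in $\mathcal C$. As in the filtered setting, the underlying group is cut out by a pointwise condition: one keeps the elements $g \in G$ whose orbit map $B \to G$, $b \mapsto b \cdot g$, is continuous at $1$, and refines the neighbourhood filter of $1$ so that for every $V \in \mathcal V_G(1)$ there exist $W \in \mathcal V_B(1)$ and $U \in \mathcal V_G(1)$ with $[W, U] \subseteq V$ in the semi-direct product $B \ltimes G$. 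Cartesian closedness of $\mathcal C$ is then used to verify that $t(B, G)$ is again a group object of $\mathcal C$ and is stable under the $B$-action. Iterating $t^0 := G$, $t^{l+1} := t(B, t^l)$, and taking $t^\infty := \varprojlim_l t^l$ in the category of group objects of $\mathcal C$ produces the desired right adjoint; the universal property is checked by the same inductive argument as in Proposition~\ref{maximum_action}, since every $B$-equivariant monomorphism $H \hookrightarrow G$ from a genuine $B$-object is already contained in each $t^l$.

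The main obstacle is precisely the construction of $t(B, G)$ and the verification that $B$-continuity survives passage to the limit $t^\infty$. This is where the convenient hypothesis is indispensable: cartesian closedness is used both to transpose the tentative action into a map to an internal automorphism group and to ensure that $B \times -$ commutes with the filtered limit defining $t^\infty$. Without this, as the statement warns, the theorem need not hold in arbitrary full subcategories of $\mathcal Top$.
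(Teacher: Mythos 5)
Your overall architecture is exactly the paper's: factor $\alpha^*$ through the fully faithful forgetful functor $\omega$, use the right Kan extension $Y \mapsto \hom_E(B,Y)$, and obtain a right adjoint $t^\infty(B,-)$ to $\omega$ by iterating a one-step construction $t$ and passing to the limit, with the continuity of $B \times G_\infty \to G_\infty$ checked through the factorisation $B \times G_\infty \to B \times G_{l+1} \to G_l$. The gap is in the one step itself, your topological analogue of Lemma \ref{transportation}. First, keeping only those $g$ whose orbit map $b \mapsto b \cdot g$ is continuous \emph{at $1$} does not give a $B$-stable subset: the orbit map of $b\cdot g$ evaluated at $w$ is the orbit map of $g$ evaluated at $wb$, so its continuity at $1$ is continuity of the orbit map of $g$ at $b$, which you have not assumed. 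One must require continuity of the whole orbit map, i.e.\ that the transpose $\bar a : g \mapsto (-)\cdot g$ lands in $\mathcal C(B,G)$; this is how $G_1$ is defined in the paper, and $B$-stability then follows because $\mathcal C(B,G)$ is a $B$-stable subgroup of the group of all maps $B \to G$.

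Second, and more seriously, the recipe ``refine the neighbourhood filter of $1$ so that for every $V$ there exist $W, U$ with $[W,U] \subseteq V$'' does not by itself produce a canonical \emph{coarsest} such group topology lying in $\mathcal T$ --- the paper explicitly notes that for a general $B$ no coarsest topology making $B \times G_1 \to G$ continuous need exist, and this is precisely the obstruction that local compactness or the convenient hypotheses are meant to remove. The correct use of cartesian closedness is to transpose in the variable opposite to the one you chose: not $B \to \underline{\Aut}(G)$, which topologizes nothing on the $G$ side, but $G_1 \to \mathcal C(B,G)$, $g \mapsto (-)\cdot g$, after which $\tau_1$ is defined as the $\mathcal T$-subspace topology on $G_1$ inside the exponential object $\mathcal C(B,G)$. (The existence of $\mathcal T$-subspace topologies is a separate axiom on $\mathcal T$ that you should state; completeness, cocompleteness and reflectivity in topological spaces do not obviously supply it.) With that definition, the universal property of Lemma \ref{Adjunction_step} is immediate from the exponential law, and the remainder of your argument goes through. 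A small misattribution: $B \times (-)$ preserves the limit $\varprojlim_l G_l$ because finite products commute with all limits, not because cartesian closedness makes $B\times(-)$ a left adjoint (left adjoints preserve colimits, not limits); in any case the paper's factorisation argument avoids the issue entirely.
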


\subsection{Maximal \texorpdfstring{$B$}{B}-topology}

Suppose that $B$ is locally compact (note that we do not imply that it is locally Hausdorff). We now construct the right adjoint to the forgetful functor, in a series of lemmas leading to the proof of:

\begin{prop}\label{maximum_top_action}
For any locally compact topological group $B$, the forgetful functor from $Pt_{B}(\mathcal TopGrp)$ to $\Fct(B,\mathcal TopGrp)$ has a right adjoint.
\end{prop}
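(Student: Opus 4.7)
The strategy is to imitate the proof of Proposition \ref{maximum_action} in the topological setting: I would define a transportation operation that, given $(G,\tau) \in \Fct(B,\mathcal TopGrp)$, produces a subobject $(H,\sigma) \hookrightarrow (G,\tau)$ in $\mathcal TopGrp$ on which the $B$-action comes closer to being jointly continuous, and then iterate until it actually is. Recall that a subobject here means a subgroup $H \subseteq G$ equipped with a group topology $\sigma$ finer than the subspace topology $\tau|_H$, since monomorphisms in $\mathcal TopGrp$ are the injective continuous homomorphisms.

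The construction is driven by the remark that joint continuity of $B \times G \to G$ at $(1,1)$ amounts to finding, for every $V \in \mathcal V_G(1)$, neighborhoods $U \in \mathcal V_B(1)$ and $V' \in \mathcal V_G(1)$ with $V' \subseteq \bigcap_{b \in U} b^{-1}(V)$, and the local compactness of $B$ lets us take $U$ with compact closure. I would therefore refine $\tau$ to the group topology $\sigma$ whose neighborhood basis of $1$ consists of the sets $\bigcap_{b \in K} b^{-1}(V)$ for $K \subseteq B$ compact (containing $1$) and $V \in \mathcal V_G(1)$. Each $b$ being a $\tau$-homeomorphism, these sets are stable under inversion and admit square roots for continuity of the product (replace $V$ by $W$ with $WW \subseteq V$); conjugation-invariance is the delicate point, and forces a restriction to the subgroup $H = \{g \in G : b \mapsto b \cdot g \text{ is } \tau\text{-continuous}\}$, which is $B$-stable and on which the orbits $\{b \cdot g : b \in K\}$ are compact in $(G,\tau)$, so that the standard fact that conjugation by a compact set in a topological group uniformly preserves small neighborhoods of $1$ gives some $W_0 \in \mathcal V_G(1)$ with $W_0 \subseteq b(g) W b(g)^{-1}$ for all $b \in K$, hence $gVg^{-1}$ contains the basic $\sigma$-neighborhood $\bigcap_{b \in K} b^{-1}(W_0)$. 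This defines the transportation $t(B,(G,\tau)) := (H,\sigma)$.

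I would then iterate, setting $t^0 := (G,\tau)$ and $t^{n+1} := t(B, t^n)$, and pass to $t^\infty$ by intersecting subgroups and joining topologies (equivalently, the limit in $\mathcal TopGrp$). On $t^\infty$ the action is jointly continuous: for $V = \bigcap_{b \in K} b^{-1}(W)$ a basic neighborhood of $1$, pick a compact neighborhood $U$ of $1$ in $B$ and set $V' = \bigcap_{b \in KU} b^{-1}(W)$; then $U \cdot V' \subseteq V$. The adjunction follows as in the SCF case: if $(L,\rho) \in Pt_B(\mathcal TopGrp)$ and $f : L \to G$ is a continuous $B$-equivariant morphism, joint continuity of $\rho$ forces all orbit maps on $L$ to be continuous, which equivariance transports through $f$ to give $f(L) \subseteq H$ at each step; the continuity of $f : (L,\rho) \to (H,\sigma)$ reduces to $f^{-1}(\bigcap_b b^{-1}(W)) = \bigcap_b b^{-1}(f^{-1}(W))$ being a $\rho$-neighborhood of $1$, again by the compactness argument applied on the $L$-side. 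Hence $f$ factors through every $t^n$, and so through $t^\infty$, which gives the right adjoint property.

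The main obstacle is the conjugation-invariance of the candidate neighborhood filter of $1$ for $\sigma$: establishing it is what forces the subgroup restriction, and hence the genuinely iterative character of the process rather than a one-step refinement. It is also the only place where the local compactness of $B$ really enters the argument, both to produce the compact neighborhoods $K$ and, via continuity of orbit maps, to make orbits $\{b \cdot g : b \in K\}$ compact in $(G,\tau)$, which in turn powers the uniform conjugation control.
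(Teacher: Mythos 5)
Your construction is essentially the paper's: your subgroup $H$ is exactly the paper's $G_1$, and your neighbourhood basis $\bigcap_{b\in K} b^{-1}(V)$ at the identity is precisely the trace on $G_1$ of the compact-open topology on $\mathcal C(B,G)$, which the paper obtains in one stroke from the exponential adjunction $B\times(-)\dashv \mathcal C(B,-)$ for locally compact $B$; the iteration, the limit $t^\infty$, and the universal-property argument then coincide. Your hand verification of the group-topology axioms (in particular conjugation invariance via compactness of the orbits $\{b\cdot g : b\in K\}$) is correct, just a more explicit route to the same object.
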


We will make use of the classical :
\begin{prop}\label{adjunction}
Let $B$ be a locally compact topological space. Then $B \times (-)$ has a right adjoint, given by $\mathcal C(B,-)$, where $\mathcal C(B,Y)$ is the set of continuous maps from $B$ to $Y$, endowed with the compact-open topology.
\end{prop}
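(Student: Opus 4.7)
The plan is to establish, for any topological spaces $X$ and $Y$, a natural bijection between continuous maps $B \times X \to Y$ and continuous maps $X \to \mathcal{C}(B, Y)$ via currying. Given $f \colon B \times X \to Y$, set $\hat f(x)(b) = f(b, x)$; given $g \colon X \to \mathcal{C}(B, Y)$, set $\check g(b, x) = g(x)(b)$. These assignments are inverse to each other at the level of set-theoretic functions, and naturality in $X$ and $Y$ is immediate from the formulas. The substance of the proof is therefore to verify that $\hat f$ is continuous whenever $f$ is, and that $\check g$ is continuous whenever $g$ is.

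For the direction $f \mapsto \hat f$: for each fixed $x \in X$, the map $b \mapsto f(b,x)$ is the composite $B \to B \times X \xrightarrow{f} Y$ and hence continuous, so $\hat f$ indeed takes values in $\mathcal{C}(B,Y)$. The compact-open topology has a subbase consisting of the sets $W(K,U) = \{h \in \mathcal{C}(B,Y) : h(K) \subseteq U\}$ with $K \subseteq B$ compact and $U \subseteq Y$ open, and one computes
\[ \hat f^{-1}(W(K,U)) = \{ x \in X : f(K \times \{x\}) \subseteq U \}, \]
which is open by the tube lemma applied to the compact set $K$. Note that only compactness of $K$, not local compactness of $B$, is used in this direction.

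For the direction $g \mapsto \check g$: this is where local compactness of $B$ intervenes essentially, via continuity of the evaluation map $\mathrm{ev} \colon \mathcal{C}(B,Y) \times B \to Y$, $(h,b) \mapsto h(b)$. Given $(h,b)$ with $h(b) \in U$ open, the preimage $h^{-1}(U)$ is an open neighbourhood of $b$, so local compactness furnishes a compact neighbourhood $K$ of $b$ contained in $h^{-1}(U)$; then $W(K,U) \times K^\circ$ is an open neighbourhood of $(h,b)$ whose image under $\mathrm{ev}$ lies in $U$. Continuity of $\check g$ then follows from the factorisation
\[ B \times X \xrightarrow{\mathrm{swap}} X \times B \xrightarrow{g \times \mathrm{id}_B} \mathcal{C}(B,Y) \times B \xrightarrow{\mathrm{ev}} Y. \]

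The main obstacle, and the point where the hypothesis is consumed, is precisely the continuity of evaluation: without local compactness of $B$, the curried map $\check g$ can fail to be continuous and the adjunction breaks down in the ordinary category of topological spaces. This is exactly the classical defect that the \emph{convenient} category framework invoked in the subsequent section is designed to repair, by replacing products and mapping spaces with their $k$-ified variants so that an analogous exponential law holds without any local compactness assumption on $B$.
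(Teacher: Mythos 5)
Your proof is correct and is exactly the standard argument that the paper's one-line ``idea of proof'' alludes to: currying gives the natural bijection, with the tube lemma handling the direction $f \mapsto \hat f$ and continuity of the evaluation map (the point where local compactness of $B$ is consumed) handling $g \mapsto \check g$. You in fact locate the role of local compactness more precisely than the paper's sketch does, so there is nothing to add.
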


\begin{proof}[Idea of proof]
It is easy to see that a map $B \times X \rightarrow Y$ is continuous iff $X \rightarrow \mathcal Ens (B,Y)$ takes values inside $\mathcal C(B,Y)$, and is continuous. The "only if" part crucially uses the fact that $B$ is locally compact.
\end{proof}

Consider $G$ an element of $\Fct(B,\mathcal TopGrp)$. This means that $G$ is a topological group endowed with an action of the discrete group $B$ \emph{via} continuous automorphisms. If $H \leq G$ is a subgroup, consider the map:
\[a_H: B \times H \rightarrow G\]
obtained by restriction of the action of $B$ on $G$. How can we endow $H$ with a new topology such that $a_H$ is continuous ? In order to do this, we first need to restrict to a subgroup of $G$. Indeed, if $a_H$ is continuous (for any topology on $H$), then for any $g \in H$, the map $a_H(-,g) = (-) \cdot g : B \rightarrow G$ has to be continuous. Thus we are led to the definition:
\[G_1 = \Enstq{g \in G}{(-) \cdot g \text{ is continuous}}.\]

\begin{lem}\label{G1_subgroup}
The subset $G_1$ is a subgroup of $G$, stable under the action of $B$.
\end{lem}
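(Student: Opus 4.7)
The plan is to verify the two assertions directly from the definitions, exploiting three structural facts: (i) the group operations of $G$ (multiplication and inversion) are continuous, (ii) each $b \in B$ acts on $G$ by a group homomorphism, and (iii) the group operations of $B$ are continuous. Local compactness of $B$ plays no role at this stage; it will be needed only later when we try to promote $a_H$ to a jointly continuous map.

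First, to show $G_1$ is a subgroup, I would check the three axioms in turn. For the identity $e$, the map $(-) \cdot e : B \to G$ is constant, hence continuous, so $e \in G_1$. For closure under products, if $g, h \in G_1$, then since $B$ acts by group automorphisms, $b \cdot (gh) = (b \cdot g)(b \cdot h)$; the right-hand side is the pointwise product in $G$ of two continuous maps $B \to G$, and since the multiplication of $G$ is continuous, the map $b \mapsto b \cdot (gh)$ is continuous. For closure under inverses, $b \cdot g^{-1} = (b \cdot g)^{-1}$ by the automorphism property, so $(-) \cdot g^{-1}$ is the composition of $(-) \cdot g$ with inversion in $G$, which is continuous.

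For stability under the $B$-action, given $g \in G_1$ and $b_0 \in B$, the map $(-)\cdot (b_0 \cdot g)$ factors as $b \mapsto bb_0 \mapsto (bb_0)\cdot g$, i.e., as the composition of right translation by $b_0$ in $B$ (continuous, since $B$ is a topological group) with $(-) \cdot g$ (continuous by hypothesis). Hence $b_0 \cdot g \in G_1$.

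There is no real obstacle here: the lemma is a formal bookkeeping result whose purpose is only to isolate the correct subgroup on which one can hope to define a compatible refined topology. The genuine difficulties of Proposition \ref{maximum_top_action} come afterwards, when one must exhibit a group topology on $G_1$ making $a_{G_1}$ continuous and then iterate transfinitely, which is where local compactness of $B$ (through Proposition \ref{adjunction}) will be indispensable.
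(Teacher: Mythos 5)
Your proof is correct and uses exactly the same ingredients as the paper's: continuity of multiplication and inversion in $G$, the automorphism property of the action, and continuity of right translation in $B$. The paper merely packages these checks more compactly by observing that $G_1 = \bar a^{-1}(\mathcal C(B,G))$ where $\bar a : g \mapsto (-)\cdot g$ is a $B$-equivariant group morphism into $\mathcal{E}ns(B,G)$ and $\mathcal C(B,G)$ is a $B$-stable subgroup, but the content is identical to your element-wise verification.
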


\begin{proof}
Consider $\bar a: g \mapsto (-) \cdot g$, the set map from $G$ to $\mathcal Ens (B,G)$ adjoint to the action of $B$ on $G$. Since $G$ is a topological group, the subset $\mathcal C(B,G)$ of continuous maps is a subgroup of $\mathcal Ens (B,G)$ (both are endowed with the pointwise law). Moreover, since $B$ is a topological group, it is $B$-stable under the $B$-action $b \cdot u = u(- \cdot b)$. Since $B$ acts on $G$ by group automorphism, $\bar a$ is a group morphism. Moreover, it is also $B$-equivariant, as one easily checks. Thus $G_1 = \bar a^{-1}(\mathcal C(B,G))$ is a $B$-stable subgroup of $G$.
\end{proof}

We want to define the coarsest topology on $G_1$ making $a: B \times G_1 \rightarrow G$ continuous. Such a topology exists, as it is the coarsest topology making $\bar a: G_1 \rightarrow \mathcal C(B,G)$ continuous, which is exactly the subspace topology on $G_1 \subseteq \mathcal C(B,G)$. We denote it by $\tau_1$. Remark that $\tau_1$ is finer than the subgroup topology on $G_1 \subseteq G$, because $a(1,-)$ is exactly the inclusion $G_1 \hookrightarrow G$, that has to be continuous when $G_1$ is endowed with $\tau_1$.

\begin{lem}\label{G1_action_continuous}
The group $B$ acts on $\mathcal C(B,G)$, whence also on $(G_1, \tau_1)$, by continuous automorphisms.
\end{lem}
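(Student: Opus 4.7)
I would begin by writing down the $B$-action explicitly: the action of $B$ on $G$ induces an action on $\mathcal{E}ns(B,G)$ by $(b_0 \cdot u)(b) := u(b\, b_0)$, which is precisely the action making the map $\bar a : G \to \mathcal{E}ns(B,G)$, $g \mapsto (b \mapsto b\cdot g)$, equivariant (as already used in the proof of Lemma~\ref{G1_subgroup}). This action is manifestly by group automorphisms of $\mathcal{E}ns(B,G)$ for the pointwise multiplication, and it preserves the subgroup $\mathcal{C}(B,G) \subseteq \mathcal{E}ns(B,G)$, since for continuous $u$ the composite $b_0 \cdot u = u \circ r_{b_0}$ is again continuous, where $r_{b_0} : B \to B$ denotes right-translation.

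The key step is joint continuity of the action map $\phi : B \times \mathcal{C}(B,G) \to \mathcal{C}(B,G)$. For this I would invoke Proposition~\ref{adjunction}, which gives, because $B$ is locally compact, the adjunction $B \times (-) \dashv \mathcal{C}(B,-)$. Under this adjunction, continuity of $\phi$ is equivalent to continuity of its adjoint
\[
\tilde\phi : B \times \mathcal{C}(B,G) \times B \longrightarrow G, \qquad (b_0, u, b) \longmapsto u(b\, b_0).
\]
But $\tilde\phi$ factors as $(b_0, u, b) \mapsto (b\, b_0, u) \mapsto u(b\, b_0)$, i.e.\ as the composition of multiplication in $B$ (continuous because $B$ is a topological group) with the evaluation $\mathrm{ev}: B \times \mathcal{C}(B,G) \to G$, which is the counit of the adjunction and is therefore continuous.

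Transferring the conclusion to $(G_1, \tau_1)$ is then formal: by Lemma~\ref{G1_subgroup}, $G_1$ is $B$-stable in $G$, and $B$-equivariance of $\bar a$ shows $\bar a(G_1)$ is $B$-stable in $\mathcal{C}(B,G)$. Since $\tau_1$ is \emph{by construction} the subspace topology on $G_1$ inherited via $\bar a$ from the compact-open topology on $\mathcal{C}(B,G)$, the map $\bar a : (G_1, \tau_1) \hookrightarrow \mathcal{C}(B,G)$ is a topological embedding, and the restriction of $\phi$ to $B \times (G_1, \tau_1)$ is continuous as a map into $(G_1, \tau_1)$. In particular every $b \in B$ acts as a continuous group automorphism.

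The only real obstacle is the use of the adjunction $B \times (-) \dashv \mathcal{C}(B,-)$, which in the ordinary category of topological spaces genuinely requires the local compactness of $B$; once that is in hand, the proof is purely formal juggling of adjoints. This is also precisely the reason behind the hypothesis in Proposition~\ref{maximum_top_action} and the motivation for moving to a convenient category of spaces later, where the exponential law holds without restriction on $B$.
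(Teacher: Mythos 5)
Your proof is correct, but it establishes a strictly stronger statement than the lemma asks for, and by a genuinely different route. The lemma only claims that each individual $b \in B$ acts on $\mathcal{C}(B,G)$, hence on $(G_1,\tau_1)$, by a \emph{continuous automorphism} --- which is exactly what is needed for $(G_1,\tau_1)$ to be again an object of $\Fct(B,\mathcal{T}\!op\mathcal{G}rp)$, with $B$ regarded as discrete. The paper's proof is accordingly a two-liner: the action of a fixed $b$ is precomposition with the right translation $(-)\cdot b\colon B\to B$, which is continuous, so the induced map on $\mathcal{C}(B,G)$ is continuous by functoriality of $\mathcal{C}(-,G)$ for the compact-open topology, and it is a group homomorphism because the multiplication is pointwise; no hypothesis on $B$ is used. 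You instead prove \emph{joint} continuity of $B\times\mathcal{C}(B,G)\to\mathcal{C}(B,G)$ via the exponential adjunction: the uncurried map $(b,b_0,u)\mapsto u(bb_0)$ is $\mathrm{ev}\circ(m\times\mathrm{id})$, and continuity of the counit $\mathrm{ev}$ is where local compactness of $B$ enters --- a hypothesis that is indeed in force throughout this subsection, so the argument is legitimate; the transfer to $(G_1,\tau_1)$ through the injective $B$-equivariant map $\bar a$ and the subspace topology is also fine. It is worth noting what your stronger conclusion buys: granting (as the paper does) that $(G_1,\tau_1)$ is a topological group, joint continuity of $B\times(G_1,\tau_1)\to(G_1,\tau_1)$ means $(G_1,\tau_1)$ is already a $B$-point, so in the locally compact case the iteration $t^l$ in the proof of Proposition~\ref{maximum_top_action} stabilizes after the first step and $t^\infty=t$. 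The paper's weaker lemma is what makes the passage to $t^\infty$ appear necessary; your argument would shortcut it.
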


\begin{proof}
The action of $b \in B$ on $\mathcal C(B,G)$ is given by pre-composition by $(-) \cdot b$, which is continuous. By functoriality of $\mathcal C(-,G)$, it is continuous. Moreover, since the group law is defined pointwise on the target, it acts \emph{via} an automorphism: \newline
$b \cdot(uv) = uv(- \cdot b) = u(- \cdot b)v(- \cdot b) = (b \cdot u)(b \cdot v).$
\end{proof}

\begin{lem}\label{Adjunction_step}
Let $B$ act topologically on a topological group $X$, and $f: X \rightarrow G$ be a continuous $B$-equivariant map. Then $f(X) \subseteq G_1$ and $f: X \rightarrow (G_1, \tau_1)$ is continuous.
\end{lem}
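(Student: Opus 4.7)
The plan is to prove both assertions directly from the definitions of $G_1$ and $\tau_1$, using $B$-equivariance of $f$ together with the adjunction recalled in Proposition \ref{adjunction}.

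First I would handle $f(X) \subseteq G_1$. Fix $x \in X$. By definition of $G_1$, I need to check that $b \mapsto b \cdot f(x)$ is a continuous map $B \to G$. Using $B$-equivariance of $f$, this map equals $b \mapsto f(b \cdot x)$, which is the composite
\[ B \xrightarrow{\;(-, x)\;} B \times X \xrightarrow{\;a_X\;} X \xrightarrow{\;f\;} G, \]
where $a_X$ denotes the (continuous) action of $B$ on $X$. Each factor is continuous, so $f(x) \in G_1$.

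For the continuity statement, recall that $\tau_1$ is the subspace topology on $G_1$ viewed inside $\mathcal{C}(B, G)$ via $\bar a: g \mapsto (-)\cdot g$. Thus $f: X \to (G_1, \tau_1)$ is continuous iff $\bar a \circ f: X \to \mathcal{C}(B, G)$ is continuous. Since $B$ is assumed locally compact, Proposition \ref{adjunction} gives the adjunction between $B \times (-)$ and $\mathcal{C}(B, -)$, so continuity of $\bar a \circ f$ is equivalent to continuity of its adjoint $B \times X \to G$. That adjoint sends $(b, x)$ to $b \cdot f(x) = f(b \cdot x)$, i.e.\ it is the composition $f \circ a_X$, which is continuous as a composition of continuous maps.

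No real obstacle is expected here: the lemma is essentially a direct consequence of the defining property of $\tau_1$ (the coarsest topology making $a: B \times G_1 \to G$ continuous) and the exponential adjunction of Proposition \ref{adjunction}. The only delicate point is that local compactness of $B$ is used twice, implicitly: once to know that $\tau_1$ exists and coincides with the subspace topology from $\mathcal{C}(B, G)$, and once to invoke the adjunction to conclude continuity of $\bar a \circ f$ from continuity of $f \circ a_X$.
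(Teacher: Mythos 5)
Your proof is correct and follows essentially the same route as the paper's: both factor $(b,x)\mapsto f(b\cdot x)$ as $f\circ a_X$, identify its adjoint with $\bar a\circ f$, and use Proposition \ref{adjunction} together with the identification of $\tau_1$ with the subspace topology from $\mathcal C(B,G)$. The only (harmless) difference is that you establish $f(X)\subseteq G_1$ by fixing $x$ and composing with $b\mapsto(b,x)$, whereas the paper reads it off directly from the continuity of the adjoint map.
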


\begin{proof}
The map $(b,x) \mapsto f(b \cdot x)$ is the composite $B \times X \rightarrow X \overset{f}{\rightarrow} G$, so it is continuous. Its adjoint $x \mapsto f(- \cdot x)$ is also continuous and, since it coincides with $x \mapsto (-) \cdot f(x)$, it factorizes as a set map through $G_1$, which means that $f$ takes values in $G_1$. Moreover, $f$ is a continuous map from $X$ to the subspace $G_1$ of $\mathcal C(B,G)$.
\end{proof}

\begin{proof}[Proof of Proposition \ref{maximum_top_action}] 
Let $(G, \tau)$ be a topological group on which $B$ acts by continuous automorphisms. We iterate the construction $t: G \mapsto (G_1, \tau_1)$ described above. This is possible thanks to Lemmas \ref{G1_subgroup} and \ref{G1_action_continuous}. We denote by $(G_l, \tau_l)$ the $l$-th iterate $t^l(G)$, and we define $t^\infty(G)$ as the intersection $G_\infty$ of the $G_l$, endowed with the reunion $\tau_\infty$ of the $\tau_l|_{G_\infty}$. That is, $G_ \infty$ is the (topological) projective limit of the $G_l$. 

We first show that the action of $B$ on $G_\infty$ is topological, that is, that $B \times G_\infty \rightarrow G_\infty$ is continuous. This is equivalent to the map $B \times G_\infty \rightarrow G_l$ being continuous for every $l$. But this last map can be seen as the composite $B \times G_\infty \rightarrow B \times G_{l+1} \rightarrow G_l$, and these maps are continuous by construction, so the action of $B$ on $G_\infty$ is indeed topological.

Now suppose that $B$ acts topologically on $X$, and $f: X \rightarrow G$ is a continuous $B$-equivariant map. Then, thanks to Lemma \ref{Adjunction_step}, $f: X \rightarrow (G_1, \tau_1)$ is again a continuous $B$-equivariant map, and by iterating the construction, we see that $f: X \rightarrow t^l(G)$ is, for all $l$. Thus $f$ takes values in $G_\infty$ and is continuous with respect to $\tau_\infty$. Since any continuous $B$-equivariant map $f: X \rightarrow t^\infty(G)$ comes uniquely from such an $f$ (which is continuous because the injection of $G_\infty$ into $G$ is, since it is $a_{G_\infty}(1,-)$), we have showed that $t^\infty$ is the right adjoint we were looking for.
\end{proof}

\subsection{Restricting to a convenient category of spaces}

If $B$ is any topological group acting on another topological group $G$, there does not seem to be a coarsest topology $\tau_1$ on $G_1$ making $B \times G_1 \rightarrow G$ continuous, so our construction fails to produce an adjoint to the corresponding forgetful functor. However, we can change that by restricting to a \emph{convenient} category of topological spaces. We need this category $\mathcal T$ to satisfy the following four hypotheses:

\begin{itemize}
\item $\mathcal T$ is a full subcategory of topological spaces containing the one-point space.
\item $\mathcal T$ admits (small) limits.
\item $\mathcal T$ is cartesian closed.
\item If a subset $X$ of an object $T \in \mathcal T$ is given, there should be a topology $\tau$ on $X$ such that $(X, \tau)$ is in $\mathcal T$, the injection $(X, \tau) \hookrightarrow T$ is continuous, and every $f : Y \rightarrow T$ in $\mathcal T$ such that $f(Y) \subseteq X$ defines a continuous map $f : Y \rightarrow (X, \tau)$. Such a topology is called the \emph{$\mathcal T$-subspace topology} on $X$.
\end{itemize}

\begin{fact} \textup{\cite[Prop.\ 2.12, Lem.\ 2.28 and Prop.\ 2.30]{Strickland}}.
These hypotheses are satisfied if $\mathcal T = CGWH$ is the category of compactly generated Hausdorff spaces.
\end{fact}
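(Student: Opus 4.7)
The plan is to verify the four hypotheses for $\mathcal T = CGWH$ one by one, invoking the standard machinery packaged in Strickland's notes. The first point is immediate: by definition $CGWH$ is a full subcategory of topological spaces, and the one-point space is trivially both compactly generated and weakly Hausdorff. For the second point, (small) limits are constructed in the usual way for a reflective/coreflective subcategory: take the limit in $\mathcal Top$, then apply the $k$-ification functor to land in compactly generated spaces; the weak Hausdorff property is inherited from the diagram, since it is preserved under subspaces and (arbitrary) products.

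The heart of the argument is the third hypothesis, cartesian closedness, which is precisely the property that fails in full $\mathcal Top$ and is the original motivation for working in a convenient category. I would define the internal hom $\mathrm{Map}(X,Y)$ as the $k$-ification of the compact-open topology on the set of continuous maps. The exponential adjunction then reduces to verifying that the evaluation map $\mathrm{Map}(X,Y) \times X \to Y$ is continuous, where $\times$ is the categorical product in $CGWH$, that is, the $k$-ification of the usual topological product. This verification is the main technical obstacle and requires the full interplay between $k$-ification and the compact-open topology; it is carried out in Strickland's Proposition 2.30, with Lemma 2.28 handling the preparatory step that identifies the exponent correctly.

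For the fourth hypothesis, given a subset $X \subseteq T$ with $T \in CGWH$, I would equip $X$ with the $\mathcal T$-subspace topology, defined as the $k$-ification of the usual subspace topology inherited from $T$. This space is weakly Hausdorff as a subspace of a weakly Hausdorff space, and compactly generated by construction, hence it lies in $CGWH$. The inclusion is continuous because it is so before $k$-ifying. For the universal property: if $f : Y \to T$ in $CGWH$ factors set-theoretically through $X$, then it factors continuously through $X$ endowed with the ordinary subspace topology by the universal property of the latter, and since $Y$ is already compactly generated, this factorization is automatically continuous into the $k$-ification, i.e.\ into the $\mathcal T$-subspace topology. All three ingredients appear in Strickland's notes, so the proof reduces to stringing together the cited propositions.
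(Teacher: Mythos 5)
Your verification is correct and follows exactly the standard route that the paper delegates to Strickland's notes (limits and the $\mathcal T$-subspace topology via $k$-ification, weak Hausdorffness preserved under subspaces, products and refinement of topology, and cartesian closedness via the $k$-ified compact-open topology with continuity of evaluation as the key technical point). The paper itself offers no proof beyond the citation, so your sketch is a faithful expansion of precisely the results it invokes.
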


The point, being final in $\mathcal T$, is the unit of the cartesian monoidal structure. Thus the forgetful functor to sets is $\mathcal T(*,-)$. In particular, if we denote by $\mathcal C(B,-)$ the right adjoint to $B \times(-)$, the underlying set of $\mathcal C(B,X)$ is the set of continuous maps from $B$ to $X$:
\[\mathcal T(*,\mathcal C(B,X)) = \mathcal T(B \times *, X) = \mathcal T(B, X).\]
Moreover, the underlying set of a limit is the limit of the underlying diagram to sets. Indeed, if $\mathcal D$ is a small category and $F : \mathcal D \rightarrow \mathcal T$ is a diagram, then :
\[\mathcal T(*,\lim F) = \lim \mathcal T(*, F).\]

The reader can check that the constructions of the previous paragraph work well under these hypotheses, replacing the category $\mathcal TopGrp$ by the category $\mathcal T Grp$ of group objects in $\mathcal T$. Precisely, the topology $\tau_1$ has to be the $\mathcal T$-subspace topology on $G_1 \subseteq \mathcal C(B, X)$, and $G_\infty$ has to be the limit of the $G_l$ in $\mathcal T$. Thus we can state:

\begin{prop}\label{maximum_cgwh_action}
For any topological group $B \in \mathcal T Grp$, the forgetful functor from $Pt_{B}(\mathcal TGrp)$ to $\Fct(B,\mathcal TGrp)$ has a right adjoint. 
\end{prop}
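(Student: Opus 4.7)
The plan is to transpose the proof of Proposition \ref{maximum_top_action} verbatim, substituting the three topological ingredients that required local compactness by their $\mathcal{T}$-analogues. Cartesian closedness of $\mathcal{T}$ replaces Proposition \ref{adjunction}: the internal hom $\mathcal{C}(B,-)$ is now right adjoint to $B \times (-)$ for \emph{every} $B \in \mathcal{T}$. The $\mathcal{T}$-subspace topology replaces the usual subspace topology at the two places where it was used (to define $\tau_1$, and implicitly to take the set-theoretic intersection $G_\infty = \bigcap G_l$). Finally, the existence of limits in $\mathcal{T}$, together with the fact that underlying-set functor $\mathcal{T}(*,-)$ preserves limits, replaces the ad hoc construction of the ``projective limit'' topology on $G_\infty$.

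Given $G \in \Fct(B,\mathcal{T}\mathrm{Grp})$, I would define $G_1 \subseteq G$ set-theoretically as the preimage of $\mathcal{C}(B,G) \subseteq \mathcal{E}ns(B,G)$ under the adjoint $\bar a : G \to \mathcal{E}ns(B,G)$ of the set-theoretic $B$-action. Lemma \ref{G1_subgroup} goes through without modification: $\mathcal{C}(B,G)$ is a $B$-stable subgroup of $\mathcal{E}ns(B,G)$ (the pointwise multiplication of two continuous maps is continuous because $G$ is a group object in $\mathcal{T}$), and $\bar a$ is a $B$-equivariant group homomorphism, so $G_1 = \bar a^{-1}(\mathcal{C}(B,G))$ is a $B$-stable subgroup. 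I then endow $G_1$ with $\tau_1$, the $\mathcal{T}$-subspace topology inherited from $\mathcal{C}(B,G)$. This makes $(G_1,\tau_1) \in \mathcal{T}\mathrm{Grp}$ (the group operations on $\mathcal{C}(B,G)$ are continuous and restrict to $G_1$), and the action $B \times G_1 \to G$ is continuous by adjunction. Lemma \ref{G1_action_continuous} transposes in the same way: $B$ acts on $\mathcal{C}(B,G)$ by pre-composition with right translations (which are morphisms in $\mathcal{T}$), hence on $(G_1,\tau_1)$, by continuous group automorphisms.

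The analogue of Lemma \ref{Adjunction_step} is the key adjunction step: if $B$ acts topologically on $X \in \mathcal{T}\mathrm{Grp}$ and $f : X \to G$ is continuous and $B$-equivariant, then the composite $B \times X \to X \xrightarrow{f} G$ is continuous, so by the adjunction its transpose $X \to \mathcal{C}(B,G)$ is a morphism of $\mathcal{T}$; as it coincides set-theoretically with $x \mapsto (-)\cdot f(x)$, it lands in $G_1$, and by the defining property of the $\mathcal{T}$-subspace topology it corestricts to a morphism $X \to (G_1,\tau_1)$. I then iterate $t : G \mapsto (G_1,\tau_1)$ and set $t^\infty(G) := \lim_l t^l(G)$ in $\mathcal{T}\mathrm{Grp}$. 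Because $\mathcal{T}(*,-)$ preserves limits, the underlying set is $\bigcap_l G_l$. The $B$-action on $t^\infty(G)$ is topological by the universal property of the limit: continuity of $B \times t^\infty(G) \to t^\infty(G)$ reduces to continuity of each $B \times t^\infty(G) \to G_l$, which factors as $B \times t^\infty(G) \to B \times G_{l+1} \to G_l$. The universal property of $t^\infty(G)$ as right adjoint follows by applying the Adjunction_step analogue at each level and then invoking the universal property of the limit to assemble the factorization $X \to t^\infty(G)$.

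The main point to watch is not any single hard step but that each of the four hypotheses on $\mathcal{T}$ is genuinely used and in the right place: cartesian closedness gives the adjoint $\mathcal{C}(B,-)$ used to make $(G_1,\tau_1)$ continuous; the $\mathcal{T}$-subspace condition is what lets us corestrict morphisms through $G_1 \hookrightarrow \mathcal{C}(B,G)$; completeness of $\mathcal{T}$ is what allows the iteration to terminate at $t^\infty$; and the fact that $*$ is the monoidal unit guarantees that limits and internal homs compute the expected set-theoretic objects. Once this bookkeeping is done, no new obstacle arises compared to the locally compact case.
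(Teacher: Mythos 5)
Your proposal is correct and is essentially the paper's own argument: the paper merely asserts that ``the reader can check'' that the locally compact construction carries over with $\tau_1$ replaced by the $\mathcal T$-subspace topology on $G_1 \subseteq \mathcal C(B,G)$ and $G_\infty$ by the limit of the $G_l$ in $\mathcal T$, which is precisely the substitution you carry out in detail. Your explicit bookkeeping of where each of the four hypotheses on $\mathcal T$ enters is a faithful (and more complete) rendering of the intended proof.
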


Remark that this proposition, together with the following fact, suggest that $\mathcal TGrp$ is a nice category to work with.

\begin{fact}
The category $\mathcal TGrp$ is also action-representative: a representant of actions on $G$ is the set of continuous automorphisms $\Aut(G) \subset \mathcal C(G,G)$, endowed with the $\mathcal T$-subspace topology.
\end{fact}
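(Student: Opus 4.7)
The plan is to establish, naturally in $B \in \mathcal{T}Grp$, a bijection between topological actions of $B$ on $G$ and morphisms $B \to \Aut(G)$ in $\mathcal{T}Grp$. The key tool is the cartesian closure adjunction $B \times (-) \dashv \mathcal{C}(B,-)$ of $\mathcal{T}$, combined with the universal property of the $\mathcal{T}$-subspace topology on $\Aut(G) \subset \mathcal{C}(G,G)$.

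First I would pass to the adjoint transpose. A topological action of $B$ on $G$ is a morphism $a : B \times G \to G$ in $\mathcal{T}$ satisfying the action axioms with each $a(b,-)$ a group automorphism of $G$. Cartesian closure turns such an $a$ into a morphism $\tilde a : B \to \mathcal{C}(G,G)$ in $\mathcal{T}$. The automorphism condition is precisely that the underlying set map of $\tilde a$ factors through $\Aut(G)$; by the subspace axiom on $\mathcal{T}$ this factorization is automatically continuous, giving a $\mathcal{T}$-morphism $B \to \Aut(G)$. Conversely, any $\mathcal{T}$-morphism $B \to \Aut(G)$ composed with the inclusion $\Aut(G) \hookrightarrow \mathcal{C}(G,G)$ untransposes to a continuous map $B \times G \to G$ acting by group automorphisms. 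The remaining action axioms translate exactly into $\tilde a$ being a group morphism, where $\Aut(G)$ carries composition as multiplication with unit $\mathrm{id}_G$. Naturality in $B$ is inherited from the naturality of the adjunction isomorphism.

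For this correspondence to yield a representability statement \emph{in $\mathcal{T}Grp$}, I next need to verify that $\Aut(G)$ with composition, unit, and inversion is itself a group object in $\mathcal{T}$. The unit is the factorization through $\Aut(G)$ of the continuous morphism $* \to \mathcal{C}(G,G)$ selecting $\mathrm{id}_G$ (the transpose of $\mathrm{id}_G : G \to G$). Continuity of composition $\Aut(G) \times \Aut(G) \to \Aut(G)$ follows from continuity of composition $\mathcal{C}(G,G) \times \mathcal{C}(G,G) \to \mathcal{C}(G,G)$, a general consequence of cartesian closure (it is the transpose of the twofold evaluation $\mathcal{C}(G,G) \times \mathcal{C}(G,G) \times G \to G$), together with the subspace axiom since composition restricts to values in $\Aut(G)$.

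The main obstacle is the continuity of the inversion $\iota : \Aut(G) \to \Aut(G)$, $\varphi \mapsto \varphi^{-1}$. This is a well-known delicate point: even classically, inverse on $\Aut(G)$ with the compact-open topology need not be continuous. My approach is to realize $\Aut(G)$ as the equalizer in $\mathcal{T}$ (which exists by completeness) of the two morphisms $\mathcal{C}(G,G) \times \mathcal{C}(G,G) \rightrightarrows \mathcal{C}(G,G) \times \mathcal{C}(G,G)$ given by $(\varphi,\psi) \mapsto (\varphi \circ \psi,\, \psi \circ \varphi)$ and by the constant pair $(\mathrm{id}_G, \mathrm{id}_G)$. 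This equalizer is in set-bijection with $\Aut(G)$ via its first projection, and the second projection realizes $\iota$ as a $\mathcal{T}$-morphism. The remaining task is to identify the equalizer topology with the $\mathcal{T}$-subspace topology on $\Aut(G)$, via the universal properties of both constructions together with the cartesian-closed structure on $\mathcal{T}$. Once this identification is made, $\iota$ is continuous as a projection out of a limit, $\Aut(G)$ becomes a group object in $\mathcal{T}$, and the preceding dictionary gives the claimed representability in $\mathcal{T}Grp$.
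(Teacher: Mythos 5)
The paper states this result as a \emph{Fact} without any proof, so there is no argument of the author's to compare yours against; judged on its own terms, your proposal has one genuine gap, and it sits exactly where you yourself locate the difficulty. The adjoint-transpose dictionary between actions $B\times G\to G$ and continuous homomorphisms $B\to\Aut(G)$ is fine, as are the continuity of composition and of the unit. The problem is the continuity of inversion, and your proposed resolution does not resolve it: identifying the equalizer (``graph'') topology on $\Enstq{(\varphi,\psi)}{\varphi\psi=\psi\varphi=\mathrm{id}}$ with the $\mathcal T$-subspace topology on $\Aut(G)\subseteq\mathcal C(G,G)$ is not a ``remaining task'' that follows from juggling universal properties --- it is logically equivalent to what you are trying to prove. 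The subspace axiom hands you the continuous map from the equalizer to $\Aut(G)$ (first projection) for free, but the comparison map in the other direction, $\varphi\mapsto(\varphi,\varphi^{-1})$, is continuous into the product if and only if inversion is already continuous on $\Aut(G)$ with the subspace topology. So the argument is circular at its crucial step, and this is not a formality: for the plain compact-open-type topology on spaces of maps, continuity of inversion on subgroups of units is known to fail in general. (A secondary slip: your equalizer carves out continuous maps with continuous two-sided inverses, i.e.\ homeomorphism pairs, not automorphisms; the group-homomorphism condition must be imposed as a further subobject.)

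The good news is that your equalizer is essentially the right object, just not obviously with the topology in the statement. Take $A$ to be the set of pairs $(\varphi,\psi)$ of mutually inverse continuous group automorphisms, with the $\mathcal T$-subspace topology of $\mathcal C(G,G)\times\mathcal C(G,G)$. On $A$, inversion is the coordinate swap, hence continuous, and composition is continuous by your cartesian-closure argument, so $A$ is a group object in $\mathcal T$. The dictionary with actions survives unchanged: given an action $a$, the map $b\mapsto(\tilde a(b),\tilde a(b^{-1}))$ is continuous because each coordinate is (the second being the first precomposed with inversion on $B$), so every continuous homomorphism from a group object into $\Aut(G)$ with the subspace topology lifts automatically and uniquely to $A$. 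This yields a correct proof that $\mathcal TGrp$ is action-representative with actor $A$; whether the continuous bijection $A\to\Aut(G)$ is an isomorphism in $\mathcal T$ --- i.e.\ whether the Fact holds with the topology as literally stated --- is precisely the continuity-of-inversion question that your write-up leaves open.
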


\subsubsection*{A remark on \texorpdfstring{$\mathcal T$}{T}-denriched categories}

Theorem \ref{top_groups_is_LACC} can be obtained directly, in a fashion similar to the construction of co-induction for groups. To do that, we use the language of $\mathcal T$-enriched categories \cite{Kelly}. The category $\mathcal TGrp$ is $\mathcal T$-enriched ; moreover, every $\mathcal T$-group $B$ can be considered as a $\mathcal T$-category with one object, and there is an obvious equivalence:
\[Pt_{B}(\mathcal TGrp) \simeq \Fct_{\mathcal T}(B, \mathcal TGrp).\]

Thus the same construction as in ordinary groups (Section \ref{section_groups}) works here, replacing Kan extensions by enriched Kan extensions. This uses the fact that $\mathcal TGrp$ is $\mathcal T$-complete (that is, it is complete and co-tensored over $\mathcal T$).

This gives an alternative proof of Theorem \ref{top_groups_is_LACC}. However, neither Theorem \ref{coinduction_between_lc_groups} nor Theorem \ref{SCF_is_LACC} fits in this machinery. Moreover, Proposition \ref{maximum_cgwh_action} is still meaningful in this context: it provides a right adjoint to the forgetful functor from the category $\Fct_{\mathcal T}(B, \mathcal TGrp)$ of enriched functors  to the category $\Fct(B, \mathcal TGrp)$ of non-enriched ones.
			
\bibliographystyle{alpha}
\bibliography{Ref_LACC}

\end{document}